\newcommand{\CC}{\mathbb C}
\newcommand{\PP}{\mathbb P}
\newcommand{\ZZ}{\mathbb Z}
\newcommand{\mcA}{\mathcal{A}}
\newcommand{\mcB}{\mathcal B}
\newcommand{\mcC}{\mathcal C}
\newcommand{\mcO}{\mathop {\mathcal O}\nolimits}
\newcommand{\mcT}{\mathcal T}
\DeclareMathOperator{\HH}{H}
\newcommand{\Ima}{\mathop {\rm Im}\nolimits}
\newcommand{\id}{\mathop {{\rm id}}\nolimits}
\newtheorem{thm}{Theorem}[section] 
\newtheorem{lem}[thm]{Lemma}     
\newtheorem{cor}[thm]{Corollary}
\newtheorem{prop}[thm]{Proposition}
\newtheorem{claim}{Claim}
\theoremstyle{definition}
\newtheorem{defin}[thm]{Definition}
\newtheorem{rem}[thm]{Remark}
\begin{document}

\title{Connected numbers and the embedded topology of plane curves}
\author{Taketo Shirane}
\date{
}
\maketitle

\begin{abstract}
The splitting number of a plane irreducible curve for a Galois cover is effective to distinguish the embedded topologies of plane curves. 
In this paper, we define a \textit{connected number} of any plane curve for a Galois cover whose branch divisor has no common components with the plane curve, which is similar to the splitting number. 
We classify  the embedded topology of Artal arrangements of degree $b\geq 4$ by the connected number, where an Artal arrangement of degree $b$ is a plane curve consisting of one smooth curve of dgree $b$ and three total inflectional tangents. 
\end{abstract}

\section{Introduction}

In this paper, we study the embedded topologies of plane curves based on the ideas used in \cite{bannai} and \cite{shirane}. 
Here a plane curve $C$ is a reduced divisor on the complex projective plane $\PP^2:=\PP^2_{\CC}$, and the \textit{embedded topology} of the plane curve $C$ is the homeomorphism class of the pair $(\PP^2,C)$. 
We call the homeomorphism class of the pair $(T(C), C)$ the \textit{combinatorics} of a plane curve $C$, where $T(C)$ is a tubular neighborhood $T(C)$ of $C$. 
It is obvious that the combinatorics of a plane curve is an invariant of the embedded topology. 
Moreover, it is known that the combinatorics of two plane curves $C_1, C_2$ are same if and only if $C_1$ and $C_2$ are equisingular (see \cite[Remark~3]{act} for detail). 
Hence the combinatorics gives a simple classification of the embedded topologies. 
However, 
a result of Zariski in \cite{zariski} showed that the classification by the combinatorics is coarse, i.e., the combinatorics does not determine the embedded topologies. 

The aim of this paper is to give a finer classification of the embedded topology of plane curves than the one by the combinatorics. 
Our strategy for the aim above consists of the following two steps (see \cite[pp.5]{act} for other strategies); 
\begin{enumerate}
\item choosing an invariant of the embedded topologies of plane curves, and 
\item proving the existence of a plane curve with a given combinatorial type and an expected value of the invariant. 
\end{enumerate}
As the consequence of the step (ii), we obtain a $k$-plet $(C_1,\dots,C_k)$ of $k$ plane curves for some integer $k\geq 2$ such that, for any $i\ne j$, the combinatorics of $C_i$ and $C_j$ are same, but their embedded topologies are different. 
We call such a $k$-plet a \textit{Zariski $k$-plet}, and \textit{Zariski pair} if $k=2$. 
The step (ii) is equivalent to discover a Zariski $k$-plet by the chosen invariant. 

For the step (i) above, the classical invariant is the fundamental group of the complement of a plane curve.  
Many Zariski pairs discovered by using the fundamental groups as Zariski \cite{zariski} (cf. \cite{act, oka, rybnikov, tokunaga-dihedral}). 
Some invariants different from the fundamental group are chosen in the step (i) above: 
Artal, Cogolludo and Carmona proved that the braid monodromy is a topological invariant of certain plane curves in \cite{acc}, and there is a complement-equivalent Zariski pair whose embedded topologies are distinguished by the braid monodromy (\cite{act}). 
In \cite{benoit-meilhan}, Guerville and Meilhan defined an invariant of the embedded topology of plane curves, called the \textit{linking set}; and they discovered a Zariski pair of line arrangements. 
In this paper, we give a new topological invariant, called a \textit{connected number}. 
This invariant is based on the following studies about \textit{splitting curves}: 

Artal and Tokunaga studied splitting curves for double covers to prove difference of the fundamental group in \cite{artal-tokunaga}. 
Bannai defined the \textit{splitting type} of splitting curves, and gives a method to prove the difference of embedded topologies of plane curves not through the fundamental group in \cite{bannai}. 
In \cite{shirane}, the author defined the \textit{splitting number} of irreducible curves for Galois covers based on Bannai's idea; and by using the splitting number, he proved that the $\pi_1$-equivalent equisingular curves defined by Shimada in \cite{shimada} provide $\pi_1$-equivalent Zariski $k$-plets. 
This result shows importance of study of splitting curves to distinguish embedded topology of plane curves. 

In this paper, we define the connected number of plane curves (not necessarily irreducible) for Galois covers, which is an invariant under certain homeomorphisms from $\PP^2$ to itself (see Definition~\ref{def. connected number} and Proposition~\ref{prop. inv}). 
The connected number is similar to the splitting number, but not its generalization (see Remark~\ref{rem. splitting and connected}). 
We give a method to compute the connected number of certain plane curves for cyclic covers (see Theorem~\ref{thm. cn} and Corollary~\ref{cor. cn}). 
Finally, we classify the embedded topologies of Artal arrangements of degree $b\geq 4$ by the connected number, i.e., we give Zariski $k$-plets of Artal arrangements of $b\geq 4$, where an Artal arrangement of degree $b$ is a plane curve consisting of one smooth curve of degree $b$ and three lines (the Artal arrangement of degree $3$ was first studied by Artal \cite{artal}, see Section~4 for detai).

\section{Connected number}\label{sec. connected number}

In this section, we define the connected number of plane curves for Galois covers (Definition~\ref{def. connected number}), and state the main results about the connected number for cyclic covers (Theorem~\ref{thm. cn} and Corollary~\ref{cor. cn}).

\begin{defin}\label{def. connected number}
Let $Y$ be a smooth variety, 
and let $\phi:X\to Y$ be a Galois cover branched along $B$. 
Let $C\subset Y$ be an algebraic subset of $Y$ such that all irreducible components of $C$ are not contained in $B$, and $C\setminus B$ is connected. 
We call the number of connected components of $\phi^{-1}(C\setminus B)$ the \textit{connected number} of $C$ for $\phi$, and denote it by $c_{\phi}(C)$. 
\end{defin}

\begin{rem}\label{rem. splitting and connected}
\begin{enumerate}
\item
It is obvious that the connected number $c_{\phi}(C)$ divides the degree of $\phi$. 
\item
For an irreducible plane curve $C\subset\PP^2$, we have 
$c_{\phi}(C)\leq s_{\phi}(C)$, 
where $s_{\phi}(C)$ is the splitting number of $C$ for $\phi$. 
There exists a nodal irreducible plane curve $C$ with a contact conic $\Delta$ such that $s_{\phi}(C)=2$ and $c_{\phi}(C)=1$ for the double cover $\phi:X\to\PP^2$ branched at $\Delta$ (see \cite{bannai-shirane}). 
If $C$ is non-singular, then $c_{\phi}(C)=s_{\phi}(C)$. 
\end{enumerate}
\end{rem}

By \cite[Proposition~1.3]{shirane}, we obtain the following proposition. 

\begin{prop}\label{prop. inv}
Let $B_i$ ($i=1,2$) be reduced divisors on a smooth variety $Y$, and 
let $G$ be a finite group. 
For each $i=1,2$, let $\phi_i:X_i\to Y$ be the $G$-cover induced by a surjection $\theta_i:\pi_1(Y\setminus B_i)\twoheadrightarrow G$. 
Let $C_1$ be an algebraic subset of $Y$ such that all irreducible components of $C_1$ are not contained in $B_1$, and $C_1\setminus B_1$ is connected. 
Assume that there exists a homeomorphism $h:Y\to Y$ with $h(B_1)=B_2$ and an automorphism $\sigma:G\to G$ such that  $\sigma\circ\theta_2\circ h_{\ast}=\theta_1$. 
Then $c_{\phi_1}(C_1)=c_{\phi_2}(C_2)$, where $C_2=h(C_1)$. 
\end{prop}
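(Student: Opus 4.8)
The plan is to show that the homeomorphism $h$ lifts to a homeomorphism of the covers, and that this lift carries $\phi_1^{-1}(C_1 \setminus B_1)$ onto $\phi_2^{-1}(C_2 \setminus B_2)$, thereby inducing a bijection between their connected components. Since the connected number counts exactly these components, the equality $c_{\phi_1}(C_1) = c_{\phi_2}(C_2)$ follows. The key input is the compatibility condition $\sigma \circ \theta_2 \circ h_* = \theta_1$, which is precisely what is needed to match the monodromy data of the two $G$-covers under $h$; this is the content of \cite[Proposition~1.3]{shirane}, which I would invoke for the existence of the lift.

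Concretely, here is how I would carry it out. First, restrict attention to the unbranched parts: $h$ restricts to a homeomorphism $h : Y \setminus B_1 \to Y \setminus B_2$, inducing $h_* : \pi_1(Y \setminus B_1) \to \pi_1(Y \setminus B_2)$ on fundamental groups. The covers $\phi_i$ restricted over $Y \setminus B_i$ are the unramified $G$-covers classified by the surjections $\theta_i$. The relation $\sigma \circ \theta_2 \circ h_* = \theta_1$ says that $h_*$ identifies the two monodromy representations up to the automorphism $\sigma$ of $G$. By the general theory of covering spaces (the functoriality invoked in \cite[Proposition~1.3]{shirane}), this yields a homeomorphism $\tilde h : X_1^\circ \to X_2^\circ$ between the restricted covers $X_i^\circ := \phi_i^{-1}(Y \setminus B_i)$ satisfying $\phi_2 \circ \tilde h = h \circ \phi_1$, and $\tilde h$ is $\sigma$-equivariant for the $G$-actions.

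Next I would use this commuting square to transport the relevant subsets. Because $C_1 \setminus B_1$ is connected and disjoint from $B_1$, it lies in $Y \setminus B_1$, so $\phi_1^{-1}(C_1 \setminus B_1) \subset X_1^\circ$; similarly $\phi_2^{-1}(C_2 \setminus B_2) \subset X_2^\circ$. Since $h(C_1) = C_2$ and $h(B_1) = B_2$, we get $h(C_1 \setminus B_1) = C_2 \setminus B_2$, and the intertwining relation $\phi_2 \circ \tilde h = h \circ \phi_1$ then forces
\[
\tilde h\bigl(\phi_1^{-1}(C_1 \setminus B_1)\bigr) = \phi_2^{-1}(C_2 \setminus B_2).
\]
As $\tilde h$ is a homeomorphism, it restricts to a homeomorphism between these two subsets, hence induces a bijection on their sets of connected components, giving $c_{\phi_1}(C_1) = c_{\phi_2}(C_2)$.

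The routine verifications are the set-theoretic identities on preimages, which follow formally from the commuting diagram, and the observation that a homeomorphism preserves the count of connected components. \textbf{The main obstacle}, and really the only substantive point, is establishing the existence of the equivariant lift $\tilde h$ from the compatibility of monodromy representations; fortunately this is exactly what \cite[Proposition~1.3]{shirane} supplies, so in this paper the proof reduces to checking that the hypotheses of that proposition are met and then tracking the subsets through the resulting diagram. I would therefore keep the write-up short, citing \cite[Proposition~1.3]{shirane} for the lift and spending the remaining words only on the preimage bookkeeping.
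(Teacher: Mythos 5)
Your proof is correct and follows essentially the same route as the paper, which offers no written argument beyond the one-line citation of \cite[Proposition~1.3]{shirane} for the existence of the equivariant lift $\tilde h$. Your reconstruction of the remaining bookkeeping (transporting $\phi_1^{-1}(C_1\setminus B_1)$ through the commuting square and counting connected components) is exactly the intended content, so nothing further is needed.
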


For a plane curve $B\subset\PP^2$ and a cyclic cover $\phi:X\to\PP^2$ induced by a surjection $\theta:\pi_1(\PP^2\setminus B)\twoheadrightarrow\ZZ/m\ZZ$, 
let $\mcB_{\theta}$ denote the following divisor: 
\[ \mcB_{\theta}:=\sum_{i=1}^{m-1}i\cdot B_i, \]
where $B_i$ is the sum of irreducible components whose meridians are mapped to $[i]\in\ZZ/m\ZZ$ by $\theta$ for each $i=1,\dots,m-1$. 
In this case, the degree of $\mcB_{\theta}$ is divided by $m$ by \cite[Example~2.1]{pardini}, 
and we call $\phi:X\to\PP^2$ the \textit{cyclic  cover of degree $m$ branched at $\mcB_{\theta}$}. 

Conversely, 
for a divisor $\mcB=\sum_{i=1}^{m-1}i\cdot B_i$ such that $B=\sum_{i=1}^{m-1}B_i$ is reduced, 
if the degree of $\mcB$ is divided by $m$, then there exists the cyclic cover of degree $m$ branched at $\mcB$ by \cite[Example~2.1]{pardini}. 
The cyclic cover $\phi_{\mcB}:X_{\mcB}\to\PP^2$ branched at $\mcB$ is constructed as follows: 

Let $n$ be the quotient of $\deg \mcB$ by $m$, $\deg\mcB=mn$. 
For the invertible sheaf $\mcO(n)$ on $\PP^2$, let $p_n:\mcT_n\to\PP^2$ be the line bundle associated to $\mcO(n)$. 
Let $t_n\in\HH^0(\mcT_n,p_n^{\ast}\mcO(n))$ be a tautological section. 
The variety $X_{\mcB}$ is constructed as the normalization of $X'_{\mcB}$ defined by $t_n^m=p_n^{\ast}F_{\mcB}$ in $\mcT_n$, 
where $F_{\mcB}\in\HH^0(\PP^2,\mcO(mn))$ is a global section defining $\mcB$, and $\phi_{\mcB}:X_{\mcB}\to\PP^2$ is the composition of the normalization $X_\mcB\to X_\mcB'$ and $p_n:\mcT_n\to\PP^2$. 

Note that a global section $F\in \HH^0(\PP^2,\mcO(n))$ of $\mcO(n)$ corresponds to a section $\PP^2\to\mcT_{n}$ of the line bundle $p_{n}$, denote it by the same notation $F:\PP^2\to\mcT_{n}$. 
Since we have the morphism $p_{\mathrm{pow}}:\mcT_{n}\to\mcT_n^{\otimes \mu}\cong\mcT_{\mu n}$ defined locally by $(P,t)\mapsto (P,t^\mu)$ for $P\in\PP^2$ and $t\in\CC$, we can regard the equation $t_n^{\mu}-h=0$ as a defining equation of $p_{\mathrm{pow}}^{-1}(\Ima(h))$ for a continuous map $h:\mcC\to\mcT_{\mu n}$ from a reduced divisor $\mcC$ on $\PP^2$ to $\mcT_{\mu n}$ satisfying $p_n\circ h=\id_{\mcC}$. 


\begin{thm}\label{thm. cn}
Let $B=\sum_{i=1}^{m-1}B_i$ and $\mcC$ be two plane curves on $\PP^2$ with no common components. 
Assume that the degree $\mcB=\sum_{i=1}^{m-1}i\cdot B_i$ is divided by $m$, 
that all irreducible components of $\mcC$ are smooth, and 
that $\mcC$ is smooth at all intersection points of $\mcC$ and $B$. 
Let $\phi_{\mcB}:X_{\mcB}\to\PP^2$ be the cyclic cover branched at $\mcB$, and  put $n:=\deg \mcB/m$. 
Let $F_{\mcB}\in \HH^0(\PP^2,\mcO(mn))$ be a global section defining $\mcB$. 
Then, for a divisor $\lambda$ of $m$, say $m=\mu \lambda$, the following conditions are equivalent: 
\begin{enumerate}
\item $c_{\mcB}(\mcC)=\lambda$, where $c_{\mcB}(\mcC):=c_{\phi_{\mcB}}(\mcC)$; and 
\item $\lambda$ is the maximal divisor of $m$ such that there exists a continuous map $h:\mcC\to \mcT_{\mu n}$ satisfying the following conditions; 
\begin{enumerate}
\item\label{thm. cond1} 
	$p_{\mu n}\circ h=\id_{\mcC}$; 
\item\label{thm. cond2} 
	$h^\lambda=h^{\otimes\lambda}:\mcC\to\mcT_{\mu n}^{\otimes\lambda}=\mcT_{mn}$ is coincide with the restriction $F_{\mcB}|_{\mcC}$ of $F_{\mcB}$ to $\mcC$; and
\item\label{thm. cond3} 
	for each irreducible component $C$ of $\mcC$, there exists a global section $g_C\in \HH^0(\PP^2,\mcO(\mu n))$ such that $h|_C=g_C|_C$. 
\end{enumerate}
\end{enumerate}
\end{thm}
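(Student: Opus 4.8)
The plan is to restrict the cyclic cover to $\mcC$ and reduce the computation of $c_{\mcB}(\mcC)$ to a question about roots of the section $F_{\mcB}|_{\mcC}$. Since $\mcB$ is supported on $B$, the section $F_{\mcB}$ is nowhere vanishing on $\mcC\setminus B$, so the normalization does nothing there and $\phi_{\mcB}^{-1}(\mcC\setminus B)$ is exactly the étale cyclic cover $\{(P,t)\in\mcT_n|_{\mcC\setminus B}:t^m=F_{\mcB}(P)\}$ of the connected space $\mcC\setminus B$. Writing $f:=F_{\mcB}|_{\mcC\setminus B}\colon\mcC\setminus B\to\CC^{\ast}$, its monodromy into $\ZZ/m\ZZ$ is the reduction modulo $m$ of the winding numbers of $f$, and the number of connected components of this cover is the index of the image of this monodromy. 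A direct computation then shows that $c_{\mcB}(\mcC)$ equals the largest divisor $\lambda$ of $m$ for which $f$ admits a continuous nowhere-vanishing $\lambda$-th root on $\mcC\setminus B$, since such a root exists precisely when every winding number of $f$ is divisible by $\lambda$. This identity is the topological core of the statement; the remaining work is to upgrade ``continuous $\lambda$-th root on $\mcC\setminus B$'' to the algebro-geometric datum $h$ of condition (ii), and this is exactly where the smoothness hypotheses enter.

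For (i) $\Rightarrow$ (ii), I would start from a decomposition of $\phi_{\mcB}^{-1}(\mcC\setminus B)$ into $\lambda$ connected components. Let $\sigma$ generate the Galois group $\ZZ/m\ZZ$, acting by $t_n\mapsto\zeta_m t_n$ with $\zeta_m=\exp(2\pi\sqrt{-1}/m)$. The group acts transitively on the $\lambda$ components, so the stabilizer of a chosen component $Z$ is the subgroup $\langle\sigma^{\lambda}\rangle$ of order $\mu$, acting on $Z$ by $t_n\mapsto\zeta_\mu t_n$. Hence $t_n^{\mu}|_Z$ is invariant under this stabilizer and descends to a function $g$ on $Z/\langle\sigma^{\lambda}\rangle=\mcC\setminus B$ with $g^{\lambda}=t_n^{m}=f$; regarding $t_n^{\mu}$ as a section of $p_{\mu n}$ produces a continuous $h\colon\mcC\setminus B\to\mcT_{\mu n}$ satisfying (iia) and (iib) away from $B$. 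I would then extend $h$ across each point $P\in\mcC\cap B$: because the components of $\mcC$ are smooth and $\mcC$ is smooth at $P$, the local normalization of $X'_{\mcB}$ over the single smooth branch of $\mcC$ at $P$ is a disc on which $t_n^{\mu}$ is single-valued, so $h$ extends continuously across $P$. Finally I would check that the extended root, being algebraically cut out by $t_n$ on the smooth component, is the restriction of a global section $g_C\in\HH^0(\PP^2,\mcO(\mu n))$, yielding (iic); maximality of $\lambda$ among divisors of $m$ holds because a root of higher order would split the cover further, contradicting $c_{\mcB}(\mcC)=\lambda$.

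For the converse (ii) $\Rightarrow$ (i), I would use $h$ to factor the defining equation. Setting $\zeta_\lambda=\exp(2\pi\sqrt{-1}/\lambda)$ and using $h^{\lambda}=F_{\mcB}|_{\mcC}$ with (iia), over $\mcC$ one has
\[ t_n^{m}-F_{\mcB}=t_n^{\mu\lambda}-h^{\lambda}=\prod_{j=0}^{\lambda-1}\bigl(t_n^{\mu}-\zeta_\lambda^{\,j}h\bigr), \]
so that $\phi_{\mcB}^{-1}(\mcC\setminus B)$ breaks into the $\lambda$ pieces $\{t_n^{\mu}=\zeta_\lambda^{\,j}h\}$, which are disjoint over $\mcC\setminus B$ because $h$ is nonvanishing there. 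Each piece is a $\mu$-fold cyclic cover of $\mcC\setminus B$, and condition (iic) guarantees it is cut out by genuine global algebraic data on each component, so that its connectedness is governed by the root structure of $h$; the maximality of $\lambda$ in (ii) then forces each piece to be connected, whence $c_{\mcB}(\mcC)=\lambda$. I expect the main obstacle to be precisely this passage between the continuous and the algebraic pictures: extending a merely continuous $\lambda$-th root of $F_{\mcB}|_{\mcC}$ across the branch locus $\mcC\cap B$ and verifying that the result is polynomial on each component, where the hypotheses that the components of $\mcC$ are smooth and that $\mcC$ is smooth at each point of $\mcC\cap B$ are what make the local normalization a disc and thereby control the monodromy of $t_n^{\mu}$ at every intersection point.
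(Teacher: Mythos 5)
Your proposal is correct in outline, but it proves the theorem by a genuinely different route than the paper. The paper argues by induction on the number $k$ of irreducible components of $\mcC$: the base case $k=1$ is delegated to a cited result of Guerville-Ball\'e and the author, and the inductive step splits $\mcC=\mcC_1+\mcC_2$, attaches to each intersection point $P\in\mcC_1\cap\mcC_2$ a local discrepancy $a_P$ between the chosen roots $h_1,h_2$, proves two combinatorial claims describing exactly which sheets over $\mcC_1$ meet which sheets over $\mcC_2$ above $P$, identifies the connected number as a gcd involving the $a_P$, and then glues an explicit $h$ from powers of $h_1$ and $h_2$. You instead work globally on $\mcC\setminus B$ at once: the number of components is the index of the monodromy image of the restricted $\ZZ/m\ZZ$-cover, the map $h$ is obtained by descending $t_n^{\mu}$ from a chosen component $Z$ along its stabilizer $\langle\sigma^{\lambda}\rangle$, and condition (ii-c) is recovered by Riemann extension on each smooth component plus surjectivity of $\HH^0(\PP^2,\mcO(\mu n))\to\HH^0(C,\mcO_C(\mu n))$. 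Your route is shorter, needs no induction, and in effect reproves the paper's base case rather than citing it; the paper's route is longer but yields as a by-product the explicit intersection-point formula ($\lambda=\gcd(\nu,a_P)$) that drives Corollary~\ref{cor. cn} and the Artal-arrangement computations. Two points of your write-up deserve tightening: ``winding numbers of $f$'' are not intrinsically defined for a nowhere-vanishing section of a nontrivial line bundle (only the monodromy representation of the cover $\{t^m=F_{\mcB}\}$ is, which is all you actually use), and the final claim that ``maximality of $\lambda$ forces each piece to be connected'' is not a direct deduction from (ii-c) but follows only by combining your easy implication ($h$ for $\lambda$ gives $c_{\mcB}(\mcC)\geq\lambda$) with your hard implication applied to $\lambda'=c_{\mcB}(\mcC)$; stated that way the logic closes cleanly.
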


\begin{cor}\label{cor. cn}
Assume the same assumption of Theorem~\ref{thm. cn}, and that $\mcC$ is a nodal curve. 
Then, for a divisor $\lambda$ of $m$, the following conditions are equivalent:
\begin{enumerate}
\item $c_{\mcB}(\mcC)=\lambda$; and 
\item $\lambda$ is the maximal divisor of $m$ such that there exists a divisor $D$ on $\PP^2$ with $\lambda D|_{\mcC}=\mcB|_{\mcC}$ as Cartier divisors on $\mcC$.  
\end{enumerate}
\end{cor}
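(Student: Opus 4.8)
The plan is to deduce the corollary directly from Theorem~\ref{thm. cn} by establishing, for each divisor $\lambda$ of $m$ with $m=\mu\lambda$, the equivalence between the existence of a continuous map $h\colon\mcC\to\mcT_{\mu n}$ satisfying the three conditions of Theorem~\ref{thm. cn}(ii) and the existence of a divisor $D$ on $\PP^2$ with $\lambda D|_{\mcC}=\mcB|_{\mcC}$ as Cartier divisors on $\mcC$. Once this equivalence is proved for every $\lambda\mid m$, the two sets of admissible $\lambda$ coincide, hence so do their maximal elements, and the corollary follows from the equivalence of (i) and (ii) in Theorem~\ref{thm. cn}.

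First I would pass from $h$ to $D$. Since every component of $\mcC$ is smooth and $\mcC$ is nodal, at each node $p=C_j\cap C_k$ the local ring $\mcO_{\mcC,p}$ is exactly the fibre product of the two branch rings over their common value at $p$. The condition $h|_{C}=g_{C}|_{C}$ makes $h$ regular on each component, and continuity of $h$ forces the two branch values to agree at every node, so $h$ glues to an algebraic section $h\in\HH^0(\mcC,\mcO_{\mcC}(\mu n))$. As $\mcC$ has degree $\deg\mcC$, its ideal sheaf is $\mcO(-\deg\mcC)$ and $\HH^1(\PP^2,\mcO(\mu n-\deg\mcC))=0$, so the restriction $\HH^0(\PP^2,\mcO(\mu n))\to\HH^0(\mcC,\mcO_{\mcC}(\mu n))$ is surjective; lifting $h$ to a global section $G$ and setting $D:=\mathrm{div}(G)$ gives $D|_{\mcC}=\mathrm{div}(h)$, whence $\lambda D|_{\mcC}=\lambda\,\mathrm{div}(h)=\mathrm{div}(h^{\lambda})=\mathrm{div}(F_{\mcB}|_{\mcC})=\mcB|_{\mcC}$ by the condition $h^{\lambda}=F_{\mcB}|_{\mcC}$.

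Conversely, given $D$ with $\lambda D|_{\mcC}=\mcB|_{\mcC}$, Bézout forces $\deg D=\mu n$, so $\mcO_{\mcC}(D|_{\mcC})\cong\mcO_{\mcC}(\mu n)$; moreover $Z:=D|_{\mcC}$ is effective, because $\lambda Z=\mcB|_{\mcC}\geq 0$, and is supported on the smooth points $\mcC\cap B$, hence is Cartier. Transporting the canonical section of $\mcO_{\mcC}(Z)$ through this isomorphism yields $s\in\HH^0(\mcC,\mcO_{\mcC}(\mu n))$ with $\mathrm{div}(s)=Z$. Then $s^{\otimes\lambda}$ and $F_{\mcB}|_{\mcC}$ are sections of $\mcO_{\mcC}(mn)$ with the same divisor $\lambda Z=\mcB|_{\mcC}$, so they differ by a global unit; since $\mcC$ is connected and proper this unit is a nonzero constant, and rescaling $s$ by one of its $\lambda$-th roots arranges $s^{\otimes\lambda}=F_{\mcB}|_{\mcC}$. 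Taking $h:=s$ gives conditions $p_{\mu n}\circ h=\id_{\mcC}$ and $h^{\lambda}=F_{\mcB}|_{\mcC}$, while the per-component condition follows by lifting $s$ to a global section through the same vanishing $\HH^1(\PP^2,\mcO(\mu n-\deg\mcC))=0$.

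The step I expect to demand the most care is precisely the passage between the topological section $h$ and an algebraic object: this is where the nodal hypothesis is indispensable, since at a node regularity is equivalent to continuity of a branchwise-regular section, whereas at a worse singularity this fails. The remaining inputs — Bézout to pin down $\deg D=\mu n$, the vanishing $\HH^1(\PP^2,\mcO(k))=0$ to globalize sections, and the constancy of global units on the connected proper curve $\mcC$ — are routine, and the maximality of $\lambda$ then transfers formally between the two formulations.
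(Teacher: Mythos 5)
Your proof is correct and reaches the corollary by the same overall reduction as the paper: for each divisor $\lambda$ of $m$ one shows that a continuous map $h$ as in Theorem~\ref{thm. cn}(ii) exists if and only if a divisor $D$ with $\lambda D|_{\mcC}=\mcB|_{\mcC}$ exists, and then one invokes Theorem~\ref{thm. cn}. The technical core is organized differently, though. The paper extends $h$ to a global section $H\in\HH^0(\PP^2,\mcO(\mu n))$ by induction on the number of components: writing $\mcC=\mcC_1+\mcC_2$, it takes lifts $H_1,H_2$ over $\mcC_1,\mcC_2$ and corrects their difference using the presentation $\mcO(\mu n-c_1)\oplus\mcO(\mu n-c_2)\to\mcO(\mu n)\to\mcO_Z(\mu n)\to 0$, where the intersection scheme $Z=\mcC_1\cap\mcC_2$ is reduced by transversality. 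You instead glue $h$ in one step into an algebraic section of $\mcO_{\mcC}(\mu n)$, using that at a node the local ring is the fibre product of the two branch rings, and then lift via the surjection $\HH^0(\PP^2,\mcO(\mu n))\to\HH^0(\mcC,\mcO_{\mcC}(\mu n))$ coming from $\HH^1(\PP^2,\mcO(\mu n-\deg\mcC))=0$. The two arguments use the nodal hypothesis at exactly the same point (transversality of the branches, equivalently reducedness of $Z$, equivalently the fibre-product description of $\mcO_{\mcC,p}$ at a node), and yours avoids the induction, which is a mild simplification; you are also more explicit than the paper about the converse direction (the degree count forcing $\deg D=\mu n$, effectivity of $D|_{\mcC}$, and the constancy of the global unit on the connected proper curve $\mcC$), which the paper dispatches in a single sentence. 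One small caveat in that direction: effectivity of $D|_{\mcC}$ at points of $\Supp D\cap\mcC$ away from $B$ (in particular at nodes of $\mcC$, where the local ring is not a DVR) deserves a word if $D$ is not assumed effective, but this is a point the paper glosses over as well and it does not affect the argument.
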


\section{Proofs}

In this section, we prove Theorem~\ref{thm. cn} and Corollary~\ref{cor. cn}. 
We first prove Theorem~\ref{thm. cn}. 

\begin{proof}[Proof of Theorem~\ref{thm. cn}]
Let $B=\sum_{i=1}^{m-1}B_i$ and $\mcC$ be two curves on $\PP^2$ with no common components. 
Assume that all irreducible components of $\mcC$ are smooth, that $\mcC$ is smooth at each intersection of $B$ and $\mcC$, and the degree of $\mcB=\sum_{i=1}^{m-1}i\cdot B_i$ is divided by $m$. 
Let $\phi_\mcB:X_\mcB\to\PP^2$ be the cyclic cover branched at $\mcB$. 
Let $p_n:\mcT_n\to\PP^2$, $F_\mcB$ and $\phi'_\mcB:X'_\mcB\to\PP^2$ be the notation used in the construction of $\phi_\mcB:X_\mcB\to\PP^2$ in Section~\ref{sec. connected number}. 
Let $\kappa_\mcB:X_\mcB\to X'_\mcB$ be the morphism of the normalization. 
Since $X'_\mcB$ is smooth over $\PP^2\setminus B$, the normalization $\kappa_\mcB$ gives an isomorphism $\kappa'_\mcB$ over $\PP^2\setminus B$: 
\[ \kappa'_\mcB:=\kappa|_{X_\mcB\setminus\phi^{-1}(B)}:X_\mcB\setminus\phi_\mcB^{-1}(B)\overset{\sim}{\to}X'_\mcB\setminus(\phi'_\mcB)^{-1}(B) \]
Hence the connected number $c_{\mcB}(\mcC)$ is equal to the number of connected components of $(\phi'_\mcB)^{-1}(\mcC\setminus B)$. 
Let $f_\mcC=0$ denote a defining equation of $\mcC$. 

If there exists a continuous map $h:\mcC\to\mcT_n$ satisfying the conditions (ii-a), (ii-b) and (ii-c), then $t_n^{\mu}-\zeta_{\lambda}^{j-1}h=f_\mcC=0$ defines a subset $\widetilde{\mcC}_{j}$ of $(\phi'_\mcB)^{-1}(\mcC)$ for each $j=1,\dots,\lambda$ such that $\bigcup_{j=1}^{\lambda}\widetilde{\mcC}_j=(\phi'_\mcB)^{-1}(\mcC)$, and 
$\widetilde{\mcC}_j\setminus(\phi'_\mcB)^{-1}(B)$ ($j=1,\dots,\lambda$) are disconnected. 
Hence we have $c_{\mcB}(\mcC)\geq \lambda$. 
Thus it is sufficient to prove that there exists a continuous map $h:\mcC\to\mcT_n$ satisfying the conditions (ii-a), (ii-b) and (ii-c) such that $\widetilde{\mcC}_j\setminus(\phi'_\mcB)^{-1}(B)$ is connected, where $\widetilde{\mcC}_j$ is the subset of $(\phi'_\mcB)^{-1}(\mcC)$ defined by $t_n^{\mu}-\zeta_{\lambda}^{j-1}h=f_\mcC=0$. 

Let $\mcC=\sum_{i=1}^{k}C_i$ be the irreducible decomposition of $\mcC$. 
We prove the existence of such $h:\mcC\to\mcT_n$ by induction on the number $k$ of irreducible components of $\mcC$. 
If $\mcC$ is irreducible, hence $\mcC$ is smooth, then Theorem~\ref{thm. cn} is equivalent to \cite[Theorem~2.1]{benoit-shirane} (cf. \cite[Theorem~2.7]{shirane}). 
Moreover, the closure of each connected component of $\phi_\mcB^{-1}(\mcC\setminus B)$ is defined by $t_n^{m/\lambda}-\zeta_{\lambda}^{j-1}h=f_\mcC=0$ in $\mcT_{n}$ for $j=1,\dots,\lambda$, where $\zeta_{\lambda}$ is a primitive $\lambda$th root of the unity. 

Suppose that $k>1$, and let $\mcC_1:=\sum_{i=1}^{k-1}C_i$ and $\mcC_2:=C_k$ with $c_{\mcB}(\mcC_i)=\nu_i$ for $i=1,2$. 
Moreover, we suppose that the closure $\widetilde{\mcC}_{i j}$ of each connected component $\widetilde{\mcC}'_{i j}$ of $(\phi'_\mcB)^{-1}(\mcC_i\setminus B)$ is defined by $t_n^{\mu_i}-\zeta_{\nu_i}^{j-1}h_i=f_i=0$ in $\mcT_{n}$ for $i=1,2$ and $j=1,\dots,\nu_i$, 
where $\mu_i:=m/\nu_i$, $f_i=0$ is a defining equation of $\mcC_i$, and 
$h_i:\mcC_i\to\mcT_{\mu_i n}$ is a continuous map satisfying the conditions (ii-a), (ii-b) and (ii-c) for $\mcC_i$. 
Let $\nu$ be the greatest common divisor of $\nu_1$ and $\nu_2$, let $\gamma_i$ and $\delta$ be the integers $\nu_i/\nu$ and $m/(\gamma_1\gamma_2\nu)$, respectively. 
Then we have $\mu_1=\delta\gamma_2$, $\mu_2=\delta\gamma_1$ and $m=\delta\gamma_1\gamma_2\nu$. 
We fix a primitive $m$th root $\zeta_m$ of the unity, and we may assume that $\zeta_{\nu_i}=\zeta_m^{\mu_i}$.  
For each intersection $P\in\mcC_1\cap\mcC_2$, we also fix an open neighborhood $U_P$ of $P$ in $\PP^2$ such that $p_{n}^{-1}(U_P)\cong U_P\times\CC$. 
For the continuous maps $h_i:\mcC_i\to\mcT_{\mu_i n}$ ($i=1,2$) (resp. $F_\mcB:\mcC\to\mcT_n$), 
let $h_{i,P}:U_P\to\CC$ (resp. $F_{\mcB,P}:U_P\to\CC$) be the function such that $h_i(Q)=(Q,h_{i,P}(Q))\in\mcT_{\mu_i n}$ (resp. $F_\mcB(Q)=(Q,F_{\mcB,P}(Q))$) for any $Q\in U_P$ under a fixed isomorphism $p_n^{-1}(U_P)\cong U_P\times\CC$. 
For $P\in\mcC_1\cap\mcC_2$, let $d_P$ be a $\mu_1$th root of $h_{1,P}(P)$. 
Then we have 
\[ F_{\mcB,P}(P)=h_{1,P}^{\nu_1}(P)=h_{2,P}^{\nu_2}(P)=d_P^{m}, \ \ \mbox{and} \ \ h_{2,P}(P)=\zeta_{\nu_2}^{a_P}d_P^{\mu_2}   \]
for some integer $0\leq a_P<\nu_2$. 
Then we have the following claim. 

\begin{claim}\label{claim. c_1 and c_2}
The intersection $\widetilde{\mcC}_{1j_1}\cap\widetilde{\mcC}_{2j_2}\cap(\phi'_\mcB)^{-1}(P)$ is non-empty if and only if $j_1-j_2\equiv a_P \pmod{\nu}$. 
\end{claim}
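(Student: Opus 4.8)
The plan is to analyze when the fiber over an intersection point $P$ lies in both $\widetilde{\mcC}_{1j_1}$ and $\widetilde{\mcC}_{2j_2}$. These sets are cut out in $\mcT_n$ by the equations $t_n^{\mu_1}=\zeta_{\nu_1}^{j_1-1}h_1$ and $t_n^{\mu_2}=\zeta_{\nu_2}^{j_2-1}h_2$ respectively. A point of $(\phi'_\mcB)^{-1}(P)$ is a value $t\in\CC$ (in the local coordinate on $p_n^{-1}(U_P)\cong U_P\times\CC$) satisfying $t^m=F_{\mcB,P}(P)$, since the cover over $P$ is determined by the branch equation. So the task reduces to a finite computation in $\CC$: determine for which pairs $(j_1,j_2)$ the two systems $t^{\mu_1}=\zeta_{\nu_1}^{j_1-1}h_{1,P}(P)$ and $t^{\mu_2}=\zeta_{\nu_2}^{j_2-1}h_{2,P}(P)$ have a common root $t$.

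Let me first set up notation using the data already fixed in the statement. We have $d_P$ a chosen $\mu_1$th root of $h_{1,P}(P)$, so $h_{1,P}(P)=d_P^{\mu_1}$, and the relation $h_{2,P}(P)=\zeta_{\nu_2}^{a_P}d_P^{\mu_2}$ with $F_{\mcB,P}(P)=d_P^m$. I would look for the common root $t$ in the form $t=\zeta_m^{s}d_P$ for an integer $s$ determined modulo $m$ (every $m$th root of $d_P^m$ has this shape, using that $\zeta_{\nu_i}=\zeta_m^{\mu_i}$). Substituting into the first equation, $t^{\mu_1}=\zeta_m^{s\mu_1}d_P^{\mu_1}$, and comparing with $\zeta_{\nu_1}^{j_1-1}h_{1,P}(P)=\zeta_m^{\mu_1(j_1-1)}d_P^{\mu_1}$, membership in $\widetilde{\mcC}_{1j_1}$ becomes the congruence $s\mu_1\equiv\mu_1(j_1-1)\pmod m$, i.e. $s\equiv j_1-1\pmod{\nu_1}$. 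Likewise substituting into the second equation and using the expression for $h_{2,P}(P)$ gives $s\mu_2\equiv\mu_2(j_2-1)+\mu_2 a_P\pmod m$, i.e. $s\equiv j_2-1+a_P\pmod{\nu_2}$.

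The existence of a common $t$ is then exactly the solvability of the simultaneous congruences $s\equiv j_1-1\pmod{\nu_1}$ and $s\equiv j_2-1+a_P\pmod{\nu_2}$ in the single unknown $s$. By the standard criterion for a system of two congruences, this has a solution if and only if the two right-hand sides agree modulo $\gcd(\nu_1,\nu_2)=\nu$; that is, if and only if $(j_1-1)\equiv(j_2-1+a_P)\pmod\nu$, which simplifies to $j_1-j_2\equiv a_P\pmod\nu$. This is precisely the asserted condition. I would close by noting that whenever the congruences are solvable, any solution $s$ yields a genuine point $t=\zeta_m^s d_P$ of the fiber lying in the intersection, so the stated condition is both necessary and sufficient for nonemptiness.

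I expect the only real subtlety to be bookkeeping with the indices and the normalizations: one must verify carefully that every point of $(\phi'_\mcB)^{-1}(P)$ really is of the form $\zeta_m^s d_P$ (so that searching over $s$ captures all candidates, not just some), and that the identifications $\zeta_{\nu_i}=\zeta_m^{\mu_i}$ and $\mu_i\nu_i=m$ are used consistently when passing between the congruences mod $\nu_i$ and mod $m$. Once the local coordinate is fixed and $d_P$ is used as the common reference root, the whole claim is a routine application of the Chinese-remainder-type solvability criterion, so the congruence computation above is the heart of the argument and there is no genuine geometric obstacle beyond it.
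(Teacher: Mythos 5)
Your proposal is correct and follows essentially the same route as the paper: both reduce nonemptiness of the intersection over $P$ to the solvability of a pair of congruences coming from $t^{\mu_1}=\zeta_{\nu_1}^{j_1-1}d_P^{\mu_1}$ and $t^{\mu_2}=\zeta_{\nu_2}^{a_P+j_2-1}d_P^{\mu_2}$, the paper phrasing the criterion via B\'ezout for the coprime integers $\gamma_1,\gamma_2$ and you phrasing it as the standard compatibility condition modulo $\gcd(\nu_1,\nu_2)=\nu$. The two formulations are equivalent, and your bookkeeping (parametrizing the fiber as $t=\zeta_m^s d_P$ and dividing the congruences by $\mu_i$) is carried out correctly.
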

\begin{proof}
The intersection $\widetilde{\mcC}_{1 j_1}\cap\widetilde{\mcC}_{2 j_2}\cap(\phi'_\mcB)^{-1}(P)$ is non-empty if and only if there exists a complex number $t_P\in\CC$ such that $t_P^{\mu_1}=\zeta_{\nu_1}^{j_1-1}d_P^{\mu_1}$ and $t_P^{\mu_2}=\zeta_{\nu_2}^{a_P+j_2-1}d_P^{\mu_2}$. 
The later condition is equivalent to 
\[ j_1-j_2+(\beta_1\gamma_1-\beta_2\gamma_2)\nu\equiv a_P \pmod{m}  \]
for some integers $\beta_1$ and $\beta_2$. 
Hence, if $\widetilde{\mcC}_{1j_1}\cap\widetilde{\mcC}_{2j_2}\cap(\phi'_\mcB)^{-1}(P)\ne\emptyset$, then $j_1-j_2\equiv a_P \pmod{\nu}$. 

Conversely, if $j_1-j_2\equiv a_P\pmod{\nu}$, then $a_P=j_1-j_2+b\nu$ for some integer $b$. 
Since $\gamma_1$ and $\gamma_2$ are coprime, there exist two integers $\alpha_1$ and $\alpha_2$ such that $\alpha_1\gamma_1-\alpha_2\gamma_2=1$; 
hence, by putting $\beta_i=b\alpha_i$ for $i=1,2$, $\beta_1\gamma_1-\beta_2\gamma_2=b$; 
this implies that $\widetilde{\mcC}_{1j_1}\cap\widetilde{\mcC}_{2j_2}\cap(\phi'_\mcB)^{-1}(P)\ne\emptyset$. 
\end{proof}

We next prove the following claim. 

\begin{claim}\label{claim. c_1j and c_1j'}
Let $\widetilde{\mcC}_{1 j}$ and $\widetilde{\mcC}_{1 j'}$ be closures of connected components $\widetilde{\mcC}_{1 j}'$ and $\widetilde{\mcC}_{1 j'}'$ of $(\phi'_\mcB)^{-1}(\mcC_1\setminus B)$, respectively. 
Then $\widetilde{\mcC}_{1 j}$ and $\widetilde{\mcC}_{1 j'}$ are contained in the closure $\widetilde{\mcC}$ of a connected component of $(\phi'_\mcB)^{-1}(\mcC\setminus B)$ if and only if 
\[ [j-j']_{\nu}\in\left\langle [a_P-a_Q]_{\nu} \mid P,Q\in\mcC_1\cap\mcC_2 \right\rangle\subset\ZZ/\nu\ZZ, \]
where $[r]_{\nu}$ is the image of the integer $r$ in $\ZZ/\nu\ZZ$. 
\end{claim}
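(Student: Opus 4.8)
The plan is to reduce the claim to a purely combinatorial statement about connectivity of a bipartite graph, and then to read off the answer by a telescoping computation modulo $\nu$. First I would introduce the graph $\Gamma$ whose vertices are the lifts $\widetilde{\mcC}_{1j}$ ($1\le j\le\nu_1$) and $\widetilde{\mcC}_{2j}$ ($1\le j\le\nu_2$), joining $\widetilde{\mcC}_{1j_1}$ to $\widetilde{\mcC}_{2j_2}$ by an edge whenever they share a point of $(\phi'_\mcB)^{-1}(P)$ for some $P\in\mcC_1\cap\mcC_2$; by Claim~\ref{claim. c_1 and c_2} such an edge exists precisely when $j_1-j_2\equiv a_P\pmod{\nu}$. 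The hypothesis that $\mcC$ is smooth at every point of $\mcC\cap B$ forces each intersection point $P\in\mcC_1\cap\mcC_2$ (a singular point of $\mcC$) to lie off $B$, so that $P\in\mcC\setminus B$ and the restriction of $\phi'_\mcB$ over $\PP^2\setminus B$ is an unramified $m$-sheeted covering near $(\phi'_\mcB)^{-1}(P)$; hence gluing of the components $\widetilde{\mcC}'_{1j_1}$ and $\widetilde{\mcC}'_{2j_2}$ occurs exactly over such fibre points. Since each $\widetilde{\mcC}'_{ij}$ is connected, two closures $\widetilde{\mcC}_{1j}$, $\widetilde{\mcC}_{1j'}$ lie in the closure of a single connected component of $(\phi'_\mcB)^{-1}(\mcC\setminus B)$ if and only if the corresponding vertices lie in the same connected component of $\Gamma$. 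This reduction turns the claim into a question about paths in $\Gamma$.

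For the \emph{only if} direction I would take a path $\ell_0,r_1,\ell_1,\dots,r_s,\ell_s$ with $\ell_0=j$ and $\ell_s=j'$ alternating between the two sides, the edge $\ell_{t-1}r_t$ lying over $P_t$ and the edge $r_t\ell_t$ over $Q_t$. The edge rule gives $\ell_{t-1}-r_t\equiv a_{P_t}$ and $\ell_t-r_t\equiv a_{Q_t}$ modulo $\nu$, so $\ell_{t-1}-\ell_t\equiv a_{P_t}-a_{Q_t}$; summing over $t$ yields $j-j'\equiv\sum_t(a_{P_t}-a_{Q_t})\pmod{\nu}$, whence $[j-j']_\nu\in\langle[a_P-a_Q]_\nu\rangle$. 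For the \emph{if} direction I would argue in two moves. First, any two lifts $\widetilde{\mcC}_{1j}$, $\widetilde{\mcC}_{1j'}$ with $j\equiv j'\pmod{\nu}$ are connected: fixing any $P\in\mcC_1\cap\mcC_2$ (non-empty because $\mcC\setminus B$ is connected) and choosing $r\in\{1,\dots,\nu_2\}$ with $r\equiv j-a_P\pmod{\nu}$, the vertex $\widetilde{\mcC}_{2r}$ is adjacent to both, giving a length-two path. Second, a detour $\ell\to r\to\ell''$ through an intermediate $\mcC_2$-vertex, with the two edges lying over $P$ and $Q$ respectively, satisfies $\ell''-\ell\equiv a_Q-a_P\pmod{\nu}$ and so realizes a residue shift lying in $\langle[a_P-a_Q]_\nu\rangle$. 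Writing $[j'-j]_\nu$ as a sum of such generators and concatenating the corresponding detours produces a lift $\widetilde{\mcC}_{1\ell_s}$ with $\ell_s\equiv j'\pmod{\nu}$ that is connected to $\widetilde{\mcC}_{1j}$; the first move then connects $\widetilde{\mcC}_{1\ell_s}$ to $\widetilde{\mcC}_{1j'}$ exactly.

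The step I expect to be the main obstacle is reconciling the two indexings: connectivity is governed only by residues modulo $\nu$, whereas the lifts are indexed modulo $\nu_i=\gamma_i\nu$, so each residue class is represented by $\gamma_i$ distinct lifts. The crux is therefore to show that lifts in one residue class are automatically forced into a single component, and this is exactly where the connectivity of $\mcC\setminus B$ (equivalently $\mcC_1\cap\mcC_2\ne\emptyset$) is indispensable: without a common intersection point there would be no $\mcC_2$-vertex to bridge them, and the congruence $[j-j']_\nu=0$ would not suffice for connectivity. Once this collapse is established, the subgroup $\langle[a_P-a_Q]_\nu\rangle$ records precisely the residue shifts attainable by travelling through $\mcC_2$, and the claim follows.
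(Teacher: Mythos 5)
Your proposal is correct and follows essentially the same route as the paper: both reduce the question to the adjacency criterion of Claim~\ref{claim. c_1 and c_2} and then telescope the congruences $j_{t-1}-r_t\equiv a_{P_t}$, $j_t-r_t\equiv a_{Q_t}\pmod{\nu}$ along a path that alternates between $\mcC_1$ and $\mcC_2$ through points of $\mcC_1\cap\mcC_2$. Your bipartite-graph packaging and the explicit two-step converse (first collapsing the $\gamma_1$ lifts within a residue class, then realizing each generator $[a_P-a_Q]_\nu$ by a detour) is a somewhat more careful write-up of the paper's terser converse, but the underlying argument is the same.
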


\begin{proof}
Two connected components $\widetilde{\mcC}_{1j}', \widetilde{\mcC}_{1 j'}'$ of $(\phi'_\mcB)^{-1}(\mcC_1\setminus B)$ are contained in  a connected component $\widetilde{\mcC}'$ of $(\phi'_\mcB)^{-1}(\mcC\setminus B)$ if and only if  
there exists a path $p:[0,1]\to(\phi'_\mcB)^{-1}(\mcC\setminus B)$ such that $p(0)\in\widetilde{\mcC}'_{1 j}$ and $p(1)\in\widetilde{\mcC}'_{1 j'}$. 
We may assume that $\phi'_\mcB\circ p(0)$ and $\phi'_\mcB\circ p(1)$ are not intersection points of $\mcC_1$ and $\mcC_2$, and that $\phi'_\mcB\circ p(s)\in\mcC_1\cap\mcC_2$ for $0< s< 1$ if and only if  
either $\phi'_\mcB\circ p(s+\epsilon)\in\mcC_1$ and $\phi'_\mcB\circ p(s-\epsilon)\in\mcC_2$, or  $\phi'_\mcB\circ p(s+\epsilon)\in\mcC_2$ and $\phi'_\mcB\circ p(s-\epsilon)\in\mcC_1$ for any $0<\epsilon\ll 1$. 
Since $\phi'_\mcB\circ p(0)$ and $\phi'_\mcB\circ p(1)$ are two points of $\mcC_1$, the number of points $\Ima(\phi'_\mcB\circ p)\cap\mcC_1\cap\mcC_2$ is even. 
Let $0<s_1<\dots<s_{2k'}<1$ be the sequence such that $\Ima(\phi'_\mcB\circ p)\cap\mcC_1\cap\mcC_2=\{\phi'_\mcB\circ p(s_1),\dots,\phi'_\mcB\circ p(s_{2k'})\}$. 
Let $\widetilde{\mcC}_{1 j_i}'$ and $\widetilde{\mcC}_{2 l_i}'$ for $i=1,\dots, k'$ be the connected components of $(\phi'_\mcB)^{-1}(\mcC_1\setminus B)$ and $(\phi'_\mcB)^{-1}(\mcC_2\setminus B)$ such that $p(s)\in \widetilde{\mcC}'_{1 j_i}$ for $s_{2i}<s<s_{2i+1}$ and $p(s)\in\widetilde{\mcC}'_{2 l_i}$ for $s_{2i-1}<s<s_{2i}$, respectively, 
where $s_{2k'+1}=1$, hence $\widetilde{\mcC}'_{1j_{k'}}=\widetilde{\mcC}'_{1j'}$. 
Putting $P_i:=\phi'_\mcB\circ p(s_i)$ for $i=1,\dots,2k'$, by Claim~\ref{claim. c_1 and c_2}, we obtain 
\begin{align} 
j_{i-1}-l_{i}&\equiv a_{P_{2i-1}} \pmod{\nu},  \mbox{ and } \label{eq. 1} \\
j_{i}-l_i&\equiv a_{P_{2i}} \pmod{\nu}  \label{eq. 2}
\end{align}
for $i=1,\dots,k'$, where $j_0=j$. 
Hence, if $\widetilde{\mcC}'_{1j}$ and $\widetilde{\mcC}'_{1j'}$ are contained in a connected component $\widetilde{\mcC}'$ of $(\phi'_\mcB)^{-1}(\mcC\setminus B)$, then we have 
\[ j-j'\equiv \sum_{i=1}^{k'}(a_{P_{2i-1}}-a_{P_{2i}})\pmod{\nu}. \]

Conversely, suppose that $j-j'\equiv \sum_{i=1}^{k'}(a_{P_{2i-1}}-a_{P_{2i}})\pmod{\nu}$. 
We can find integers $0<j_i<\nu_1$ and $0<l_i<\nu_2$ for $i=1,\dots,k'$ satisfying the equations (\ref{eq. 1}) and (\ref{eq. 2}). 
By Claim~\ref{claim. c_1 and c_2}, there exists a path $p:[0,1]\to(\phi'_\mcB)^{-1}(\mcC\setminus B)$ with $p(0)\in\widetilde{\mcC}'_{1j}$ and $p(1)\in\widetilde{\mcC}'_{1j'}$. 
Therefore, $\widetilde{\mcC}'_{1j}$ and $\widetilde{\mcC}'_{1j'}$ are contained in a connected component $\widetilde{\mcC}'$ of $(\phi'_\mcB)^{-1}(\mcC\setminus B)$. 
\end{proof}

Fix a intersection point $P_0$ of $\mcC_1$ and $\mcC_2$, we may assume that $a_{P_0}=0$ after multiplying $\zeta_{\nu_2}^{-a_{P_0}}$ to $h_2$. 
Then we have 
\[ \langle [a_P-a_Q]_{\nu} \mid P,Q\in\mcC_1\cap\mcC_2 \rangle = \langle [a_P]_{\nu} \mid P\in \mcC_1\cap\mcC_2\rangle \cong \ZZ/\lambda\ZZ, \]
where $\lambda$ is the greatest common divisor of $\nu$ and $a_P$ for $P\in\mcC_1\cap\mcC_2$. 
We put $\lambda':=\nu/\lambda$. 
Then we obtain 
\begin{align*} 
h_{1,P}^{\gamma_1\lambda'}(P)&=d_P^{\mu_1\gamma_1\lambda'}=d_P^{\delta\gamma_1\gamma_2\lambda'}, \mbox{and} \\ 
h_{2,P}^{\gamma_2\lambda'}(P)&=(\zeta_{\nu_2}^{a_P}d_P^{\mu_2})^{\gamma_2\lambda'}=\zeta_{\nu_2}^{\gamma_2\nu (a_P/\lambda)}d_P^{\mu_2\gamma_2\lambda'}=d_P^{\delta\gamma_1\gamma_2\lambda'} 
\end{align*}
for any $P\in\mcC_1\cap\mcC_2$. 
We define the continuous map $h:\mcC\to\mcT_{\mu n}$ by $h(P)=h_i^{\gamma_i\lambda'}(P)$ if $P\in\mcC_i$ for $i=1,2$, where $\mu:=\gamma_1\mu_1\lambda'=\gamma_2\mu_2\lambda'=m/\lambda$. 
The map $h$ satisfies the conditions (ii-a), (ii-b) and (ii-c). 
Moreover, by Claim~\ref{claim. c_1j and c_1j'}, $t_n^{\mu}-\zeta_{\lambda}^{j-1}h=f_\mcC=0$ defines a subset $\widetilde{\mcC}_j$ of $(\phi')^{-1}(\mcC)$ such that $\widetilde{\mcC}_j\setminus(\phi')^{-1}(B)$ is connected for $j=1,\dots,\lambda$. 
\end{proof}

We next prove Corollary~\ref{cor. cn}.

\begin{proof}[Proof of Corollary~\ref{cor. cn}]
The existence of a divisor $D$ on $\PP^2$ with $\lambda D|_{\mcC}=\mcB|_{\mcC}$ implies the existence of a global section $H\in \HH^0(\PP^2,\mcO(\mu n))$ with $H^{\lambda}|_{\mcC}=F_\mcB|_{\mcC}$. 
Hence it is sufficient to prove that, for any continuous map $h:\mcC\to\mcT_{\mu n}$ satisfying the conditions (ii-a), (ii-b) and (ii-c) in Theorem~\ref{thm. cn}, there exists a global section $H\in\HH^0(\PP^2,\mcO(\mu n))$ such that $H|_{\mcC}=h$. 

Let $\mcC=\sum_{i=1}^{k}C_i$ be the irreducible decomposition of $\mcC$. 
We prove the above statement by induction on the number $k$ of irreducible components of $\mcC$. 
In the case $k=1$, it is obvious by the condition (ii-c).  
Suppose $k>1$, and put $\mcC_1:=\sum_{i=1}^{k-1}C_i$ and $\mcC_2:=C_k$. 
Let $h:\mcC\to\mcT_{\mu n}$ be a continuous map satisfying (ii-a), (ii-b) and (ii-c). 
By the assumption of induction, there exist two global sections $H_1, H_2\in\HH^0(\PP^2,\mcO(\mu n))$ such that $H_i|_{\mcC_i}=h|_{\mcC_i}$ for $i=1,2$. 
We have the following exact sequence 
\[ \mcO_{\PP^2}(\mu n-c_1)\otimes\mcO_{\PP^2}(\mu n-c_2)\overset{(f_1,-f_2)}{\longrightarrow}\mcO_{\PP^2}(\mu n)\to\mcO_{Z}(\mu n)\to 0, \]
where $c_i$ is the degree of $\mcC_i$, $f_i$ is a global section of $\mcO_{\PP^2}(c_i)$ defining $\mcC_i$ for $i=1,2$, and $Z$ is the scheme defined by the ideal sheaf generated by $f_1$ and $f_2$. 
Since $\mcC$ is a nodal curve, $\mcC_1$ and $\mcC_2$ intersect transversally. 
Hence $\mcO_Z(\mu n)$ is the skyscraper sheaf with support $\mcC_1\cap\mcC_2$ such that its stalk at each intersection $P\in\mcC_1\cap\mcC_2$ is isomorphic to $\CC$. 
Since $H_i|_{\mcC_i}=h|_{\mcC_i}$ for $i=1,2$, the image of $H_1-H_2$ on $\mcO_Z(\mu n)$ is zero. 
Hence there exist two global section $g_i\in\HH^0(\PP^2,\mcO(\mu n-c_i))$ for $i=1,2$ such that $f_1g_1-f_2g_2=H_1-H_2$. 
Then $H:=H_1-f_1g_1=H_2-f_2g_2$ satisfies $H|_{\mcC}=h$. 
\end{proof}

\section{Artal arrangements of degree $b$}

In \cite{artal}, Artal gave a Zariski pair $(\mcA_1,\mcA_2)$, where $\mcA_i$ is an arrangement of a smooth cubic $C_i$ and its three inflectional tangents $L_{i,j}$ ($j=1,2,3$) for each $i=1,2$, i.e., $C_i\cap L_{i,j}$ is just one point; 
putting $P_{i,j}$ as the tangent point of $C_i$ and $L_{i,j}$, the three point $P_{1,j}$ ($j=1,2,3$) are collinear, but the three points $P_{2,j}$ are not collinear. 
He distinguished the embedded topologies of $\mcA_1$ and $\mcA_2$ by their Alexander polynomials. 
It is possible to distinguish their embedded topologies by other invariants; the existence of dihedral coverings, the splitting number and the linking set (see \cite{tokunaga, ban-ben-shi-tok}). 
In this section, we generalize Artal's Zariski pair by using the connected number. 
First we define $k$-Artal arrangements of degree $b$. 

\begin{defin}\rm
Let $B$ be a smooth curve on $\PP^2$ of degree $b\geq 3$ having $k$ total inflection points $P_1,\dots,P_k$, and 
let  $L_i$ ($i=1,\dots,k$) be the tangent line of $B$ at $P_i$. 
We call $B+\sum_{i=1}^kL_i$ a \textit{$k$-Artal arrangement of degree $b$} if any three lines of $L_i$ ($i=1,\dots,k$) do not meet just one point. 
In the case of $k=3$, we call a $3$-Artal arrangement of degree $b$ an \textit{Artal arrangement of degree $b$}. 
\end{defin}

\begin{rem}
In \cite{ban-ben-shi-tok}, a $k$-Artal arrangement is defined as an arrangement of one smooth cubic and its $k$ inflectional tangents. 
A $k$-Artal arrangement in \cite{ban-ben-shi-tok} is our $k$-Artal arrangement of degree $3$. 
\end{rem}

We construct Zariski $k$-plets of Artal arrangements of degree $b\geq 4$ by using Fermat curves. 
For an integer $\mu\geq2$, let $F_{\mu}$ be the curve defined by $x^{\mu}+y^{\mu}+z^{\mu}=0$. 
For each $i=1,2,3$ and $j=1,\dots,\mu$, let $P^{\mu}_{i,j}$ be the following point
\[ P^{\mu}_{1,j}=(1{:}\zeta_{2\mu}^{2j-1}{:}0), \  P^{\mu}_{2,j}=(0{:}1{:}\zeta_{2\mu}^{2j-1}), \ P^{\mu}_{3,j}=(\zeta_{2\mu}^{2j-1}{:}0{:}1), \] where $\zeta_{2\mu}$ is a primitive $2\mu$-th root of unity. 
For $\mu\geq 3$, $F_{\mu}$ is the Fermat curve of degree $\mu$, and $P^{\mu}_{i,j}$ are the $3\mu$ total inflection points of $F_{\mu}$. 
Let $L^{\mu}_{i,j}$ be the tangent line of $F_{\mu}$ at $P^{\mu}_{i,j}$. 
Note that $L^{\mu}_{1,j}$, $L^{\mu}_{2,j}$ and $L^{\mu}_{3,j}$ are defined by 
\[ \zeta_{2\mu}^{2j-1}x-y=0, \ \zeta_{2\mu}^{2j-1}y-z=0 \ \mbox{and} \ \zeta_{2\mu}^{2j-1}z-x=0, \  \]
respectively, and that $L^{\mu}_{1,j_1}\cap L^{\mu}_{2,j_2}\cap L^{\mu}_{3,j_3}=\emptyset$ for any $j_1,j_2,j_3$. 
From Carnot's theorem (cf. \cite[Lemma~2.2]{coppens-kato}), we have the following lemma. 

\begin{lem}[Carnot, {cf. \cite[Lemma~2.2]{coppens-kato}}]\label{lem. carnot}
Fix integers $\mu, j_1,j_2,j_3$. 
There exists a divisor $D$ on $\PP^2$ of degree $d$ such that $D|_{L^{\mu}_{i,j_i}}=d P^{\mu}_{i,j_i}$ for all $i=1,2,3$ if and only if $(\zeta_{2\mu}^{2j_1+2j_2+2j_3-3})^{2d}=1$. 
\end{lem}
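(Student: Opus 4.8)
The plan is to translate the existence of $D$ into a gluing problem on the triangle $\Delta:=L^{\mu}_{1,j_1}+L^{\mu}_{2,j_2}+L^{\mu}_{3,j_3}$ and to read off the stated numerical condition as the obstruction to that gluing. Writing $a_i:=\zeta_{2\mu}^{2j_i-1}$, the three lines are $\{a_1x-y=0\}$, $\{a_2y-z=0\}$, $\{a_3z-x=0\}$, their pairwise intersections (the vertices of $\Delta$) are
\[ V_{12}=(1{:}a_1{:}a_1a_2),\qquad V_{23}=(a_2a_3{:}1{:}a_2),\qquad V_{31}=(a_3{:}a_1a_3{:}1), \]
and the points $P^{\mu}_{1,j_1},P^{\mu}_{2,j_2},P^{\mu}_{3,j_3}$ lie on the respective lines but at no vertex. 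An (effective) degree-$d$ divisor $D$ with $D|_{L^{\mu}_{i,j_i}}=d\,P^{\mu}_{i,j_i}$ is exactly the zero locus of a nonzero $G\in\HH^0(\PP^2,\mcO(d))$ whose restriction to each $L^{\mu}_{i,j_i}$ vanishes to order $d$ at $P^{\mu}_{i,j_i}$, so the first step is to decide for which $(\mu,j_1,j_2,j_3,d)$ such a $G$ exists.

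Next I would reduce this to $\Delta$. From the ideal-sheaf sequence $0\to\mcO_{\PP^2}(d-3)\to\mcO_{\PP^2}(d)\to\mcO_\Delta(d)\to0$ and the vanishing $\HH^1(\PP^2,\mcO(d-3))=0$, the restriction $\HH^0(\PP^2,\mcO(d))\to\HH^0(\Delta,\mcO_\Delta(d))$ is surjective; hence $G$ exists if and only if the prescribed data is a genuine section of $\mcO_\Delta(d)$. Via the normalization of the nodal curve $\Delta$, such a section is a triple of degree-$d$ forms on the three lines agreeing in the fibre at each vertex. Choosing the linear forms $M_1=z$, $M_2=x$, $M_3=y$, which vanish at $P^{\mu}_{1,j_1},P^{\mu}_{2,j_2},P^{\mu}_{3,j_3}$ respectively, the prescribed section on $L^{\mu}_{i,j_i}$ is $c_i\,M_i^d$ for unknowns $c_i\in\CC^\ast$, and the three vertex-matching conditions read $c_1M_1(V_{12})^d=c_2M_2(V_{12})^d$, $c_2M_2(V_{23})^d=c_3M_3(V_{23})^d$, and $c_3M_3(V_{31})^d=c_1M_1(V_{31})^d$. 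Multiplying these, a solution $(c_1,c_2,c_3)\in(\CC^\ast)^3$ exists if and only if
\[ \left(\frac{M_1(V_{12})\,M_2(V_{23})\,M_3(V_{31})}{M_2(V_{12})\,M_3(V_{23})\,M_1(V_{31})}\right)^{\!d}=1. \]

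Finally I would evaluate the ratio in coordinates: $M_1(V_{12})=a_1a_2$, $M_2(V_{23})=a_2a_3$, $M_3(V_{31})=a_1a_3$, while $M_2(V_{12})=M_3(V_{23})=M_1(V_{31})=1$, so the bracket equals $(a_1a_2a_3)^2$ and the criterion becomes $(a_1a_2a_3)^{2d}=1$. Since $a_1a_2a_3=\zeta_{2\mu}^{2j_1+2j_2+2j_3-3}$, this is exactly $(\zeta_{2\mu}^{2j_1+2j_2+2j_3-3})^{2d}=1$, which is the content of Carnot's theorem (cf. \cite[Lemma~2.2]{coppens-kato}) in this configuration.

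The routine parts are the vertex computation and the final substitution. The step I expect to carry the real weight is the equivalence of the second paragraph, especially the sufficiency direction, which relies on the cohomological surjectivity (so that the nodal gluing is the only obstruction to building $D$) together with the observation that the solvability of the three vertex conditions collapses to a single product relation. One must also check that this product is independent of the auxiliary choices $M_i$: intrinsically it is the class of $\mcO_\Delta(dP^{\mu}_{1,j_1}+dP^{\mu}_{2,j_2}+dP^{\mu}_{3,j_3})\otimes\mcO_\Delta(-d)$ in $\Pic^0(\Delta)\cong\CC^\ast$, which is what makes it a genuine invariant of the configuration and identifies the criterion with Carnot's theorem.
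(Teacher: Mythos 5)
Your argument is correct, and I verified the coordinate computations: the vertices $V_{12},V_{23},V_{31}$ are as you list them, the gluing ratio collapses to $(a_1a_2a_3)^{2d}$ with $a_i=\zeta_{2\mu}^{2j_i-1}$, and $a_1a_2a_3=\zeta_{2\mu}^{2j_1+2j_2+2j_3-3}$, which is exactly the stated criterion. The paper itself offers no proof of this lemma: it simply invokes Carnot's theorem via the citation to \cite[Lemma~2.2]{coppens-kato}. So your write-up is a genuinely self-contained alternative rather than a reproduction of the paper's route. What your approach buys is transparency: the exact sequence $0\to\mcO_{\PP^2}(d-3)\to\mcO_{\PP^2}(d)\to\mcO_\Delta(d)\to0$ together with $\HH^1(\PP^2,\mcO(d-3))=0$ reduces everything to gluing on the nodal triangle, the single product relation is visibly the class of $\mcO_\Delta(dP^{\mu}_{1,j_1}+dP^{\mu}_{2,j_2}+dP^{\mu}_{3,j_3})\otimes\mcO_\Delta(-d)$ in $\Pic^0(\Delta)\cong\CC^\ast$ (which also settles independence of the auxiliary forms $M_i$ and handles non-effective $D$, since the obstruction is additive in divisors and any $D|_\Delta$ with $D$ of degree $d$ is linearly equivalent to $d$ times a line section), and the existence direction actually produces an effective curve of degree $d$. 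The only hypotheses you use implicitly are that the three lines are not concurrent (stated in the paper just before the lemma, so $\Delta$ really is a triangle) and that each $P^{\mu}_{i,j_i}$ avoids the vertices, both of which are immediate from the coordinates. The citation is of course shorter, but your derivation is complete and arguably more informative for the reader who wants to see where the root-of-unity condition comes from.
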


Then we obtain the following proposition by Corollary~\ref{cor. cn} and Lemma~\ref{lem. carnot}. 

\begin{prop}\label{prop. artal}
Let $h_{\mu}, f_{\mu,i}$ ($i=1,2,3$) be the following polynomials 
\begin{align*}
h_{\mu} &:= x^{\mu}+y^{\mu}+z^{\mu}, \\
f_{\mu,1} &:= \zeta_{2\mu}x-y, \\
f_{\mu,2} &:= \zeta_{2\mu}y-z, \\
f_{\mu,3} &:= \zeta_{2\mu}^{2\mu-1}z-x. 
\end{align*}
Let $\nu$ be a positive integer, and put $b:=\mu\nu$. 
Let $B_{b,\mu}$ be the curve of degree $b$ defined by 
\[ f_{\mu,1}f_{\mu,2}f_{\mu,3}\,g+h_{\mu}^{\nu}=0, \]
where $g$ is a general homogeneous polynomial of degree $b-3$, i.e., $B_{b,\mu}$ is smooth. 
Let $\phi_{B_{b,\mu}}:X_{b,\mu}\to\PP^2$ be the cyclic cover of degree $b$ branched at $B_{b,\mu}$, 
and put $L_{\mu}:=L^{\mu}_{1,1}+L^{\mu}_{2,1}+L^{\mu}_{3,\mu}$. 
Then $c_{\phi_{B_{b,\mu}}}(L_{\mu})=\nu$. 
\end{prop}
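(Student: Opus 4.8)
The plan is to deduce the result directly from Corollary~\ref{cor. cn} together with Carnot's lemma (Lemma~\ref{lem. carnot}). First I would verify that the pair $(B_{b,\mu},L_{\mu})$ satisfies the hypotheses of Corollary~\ref{cor. cn}: the branch curve $B_{b,\mu}$ is smooth of degree $b$, so the cyclic cover has degree $m=b$ and $n=\deg\mcB/m=1$; the curve $L_{\mu}=L^{\mu}_{1,1}+L^{\mu}_{2,1}+L^{\mu}_{3,\mu}$ is a union of three lines, none of which is a component of $B_{b,\mu}$, each component is smooth, and since no three of the lines $L^{\mu}_{i,j}$ are concurrent, $L_{\mu}$ is nodal. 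One also checks that $L_{\mu}$ meets $B_{b,\mu}$ away from its nodes. With these in hand, Corollary~\ref{cor. cn} reduces the claim $c_{\phi_{B_{b,\mu}}}(L_{\mu})=\nu$ to showing that $\nu$ is the maximal divisor $\lambda$ of $b$ for which there exists a divisor $D$ on $\PP^2$ with $\lambda D|_{L_{\mu}}=\mcB|_{L_{\mu}}$.

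Next I would compute the restriction $\mcB|_{L_{\mu}}=B_{b,\mu}|_{L_{\mu}}$ component by component. Since $L^{\mu}_{1,1}$, $L^{\mu}_{2,1}$ and $L^{\mu}_{3,\mu}$ are cut out by $f_{\mu,1}$, $f_{\mu,2}$ and $f_{\mu,3}$ respectively, the term $f_{\mu,1}f_{\mu,2}f_{\mu,3}\,g$ vanishes on each line, so the restriction of $B_{b,\mu}$ to $L^{\mu}_{i,j_i}$ coincides with that of $h_{\mu}^{\nu}$. Using $\zeta_{2\mu}^{\mu}=-1$ one finds $h_{\mu}|_{L^{\mu}_{1,1}}=z^{\mu}$, $h_{\mu}|_{L^{\mu}_{2,1}}=x^{\mu}$ and $h_{\mu}|_{L^{\mu}_{3,\mu}}=y^{\mu}$, so that on each line the restriction is $b$ times the corresponding tangent point, namely $\mcB|_{L^{\mu}_{i,j_i}}=b\,P^{\mu}_{i,j_i}$ with $(j_1,j_2,j_3)=(1,1,\mu)$. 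Consequently, a divisor $D$ with $\lambda D|_{L_{\mu}}=\mcB|_{L_{\mu}}$ exists if and only if $D$ has degree $d:=b/\lambda$ and $D|_{L^{\mu}_{i,j_i}}=d\,P^{\mu}_{i,j_i}$ for $i=1,2,3$, which is precisely the existence statement governed by Lemma~\ref{lem. carnot}.

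Finally I would apply Carnot's lemma with $(j_1,j_2,j_3)=(1,1,\mu)$: the relevant exponent is $2j_1+2j_2+2j_3-3=2\mu+1$, so such a $D$ of degree $d$ exists if and only if $\zeta_{2\mu}^{(2\mu+1)2d}=1$, i.e. $2\mu \mid (2\mu+1)2d$. Since $\gcd(2\mu+1,2\mu)=1$, this is equivalent to $\mu \mid d$, and writing $d=b/\lambda=\mu\nu/\lambda$ this holds precisely when $\lambda \mid \nu$. Because $\nu \mid b$, the divisors of $b$ satisfying $\lambda \mid \nu$ are exactly the divisors of $\nu$, whose maximum is $\nu$ itself; hence the maximal admissible $\lambda$ is $\nu$, and $c_{\phi_{B_{b,\mu}}}(L_{\mu})=\nu$ by Corollary~\ref{cor. cn}. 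I expect the main obstacle to be the restriction computation of the second step --- pinning down that each line meets $B_{b,\mu}$ in a single point of multiplicity $b$ and identifying that point correctly as $P^{\mu}_{i,j_i}$ --- together with the careful translation of the Cartier-divisor condition on the nodal curve $L_{\mu}$ into the hypothesis of Lemma~\ref{lem. carnot}; the concluding number-theoretic step is then routine.
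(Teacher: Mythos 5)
Your argument is correct and follows exactly the route the paper intends: the paper offers no written proof beyond the remark that the proposition follows from Corollary~\ref{cor. cn} and Lemma~\ref{lem. carnot}, and your write-up supplies precisely that derivation, with the restriction computation $h_{\mu}|_{L^{\mu}_{1,1}}=z^{\mu}$, $h_{\mu}|_{L^{\mu}_{2,1}}=x^{\mu}$, $h_{\mu}|_{L^{\mu}_{3,\mu}}=y^{\mu}$ and the exponent $2j_1+2j_2+2j_3-3=2\mu+1$ both checking out. The final reduction ($\mu\mid d$ iff $\lambda\mid\nu$, so the maximal admissible divisor is $\nu$) is also correct.
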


Since the connected number $c_{\phi_{B}}(L)$ is divisible by $b$ for an Artal arrangement of degree $b$, $B+L$, where $L=L_1+L_2+L_3$, Proposition~\ref{prop. artal} implies that, for any possible number $\nu$ as the connected number of Artal arrangements of degree $b$, i.e., any divisor $\nu$ of $b$,  there exists an Artal arrangement of degree $b$, $B+L$, with $c_{\phi_B}(L)=\nu$. 

\begin{thm}
Let $b$ be a positive integer, and let $\mu_1,\dots,\mu_k$ be distinct divisors of $b$. 
Assume that all $B_{b,\mu_i}$ are smooth. 
Let $\mcB_{b,i}$ be the Artal arrangement $B_{b,\mu_i}+L_{\mu_i}$ of degree $b$ constructed as above. 
Then $(\mcB_{b,1},\dots,\mcB_{b,k})$ is a Zariski $k$-plet.  
\end{thm}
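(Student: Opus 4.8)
\section*{Proof proposal}

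The plan is to verify separately the two conditions that make $(\mcB_{b,1},\dots,\mcB_{b,k})$ a Zariski $k$-plet: that the members are pairwise equisingular (equivalently, share the same combinatorics, by the criterion recalled in the introduction), and that their embedded topologies are pairwise distinct, the latter being detected by the connected number via Proposition~\ref{prop. inv} together with the evaluation in Proposition~\ref{prop. artal}.

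For the combinatorial coincidence, I would describe the singular locus of each $\mcB_{b,i}=B_{b,\mu_i}+L_{\mu_i}$ explicitly. It consists of exactly six points: the three points where $B_{b,\mu_i}$ meets its total inflectional tangents, each a contact of multiplicity $b$ between two smooth branches, and the three nodes cut out by the pairwise intersections of the three lines in $L_{\mu_i}$. A short determinant check ($\zeta_{2\mu_i}-1\neq 0$) shows the three lines are never concurrent, so the triangle they form has three distinct vertices; and one checks for the explicit equations that none of the three tangent points lies on another line and none of the three vertices lies on $B_{b,\mu_i}$, so these six points stay distinct and no further degeneration occurs. Since the local analytic type at each of them and the incidence pattern among the four components depend only on $b$ and not on $\mu_i$, all the $\mcB_{b,i}$ are equisingular, hence combinatorially identical.

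To separate the embedded topologies I would argue by contradiction. Suppose for some $i\neq j$ there were a homeomorphism $h\colon(\PP^2,\mcB_{b,i})\to(\PP^2,\mcB_{b,j})$ of pairs. The smooth degree-$b$ component has positive genus $\binom{b-1}{2}$ whereas each line has genus $0$, so $h$ must send $B_{b,\mu_i}$ onto $B_{b,\mu_j}$ and therefore $L_{\mu_i}$ onto $L_{\mu_j}$. In particular $h$ is a self-homeomorphism of $\PP^2$ carrying the branch curve $B_{b,\mu_i}$ to $B_{b,\mu_j}$. Because these curves are smooth of degree $b$, one has $\pi_1(\PP^2\setminus B_{b,\mu_i})\cong\ZZ/b\ZZ$, so the two surjections $\theta_i,\theta_j$ defining the cyclic covers agree up to an automorphism of $\ZZ/b\ZZ$ after transport by $h_\ast$; that is, $\sigma\circ\theta_j\circ h_\ast=\theta_i$ for a suitable $\sigma$. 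Proposition~\ref{prop. inv} then yields $c_{\phi_{B_{b,\mu_i}}}(L_{\mu_i})=c_{\phi_{B_{b,\mu_j}}}(h(L_{\mu_i}))=c_{\phi_{B_{b,\mu_j}}}(L_{\mu_j})$, while Proposition~\ref{prop. artal} evaluates the two sides to $b/\mu_i$ and $b/\mu_j$. These are unequal because $\mu_i\neq\mu_j$ are distinct divisors of $b$, a contradiction; hence no such $h$ exists.

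The step I expect to be most delicate is the passage from a homeomorphism of pairs to the precise hypotheses of Proposition~\ref{prop. inv}. Ensuring that $h$ respects the decomposition of the arrangement into its degree-$b$ part and its line part is handled cleanly by the genus invariant, but one must still check that the map induced on $\pi_1$ of the complement of the branch curve intertwines the covering characters. This is exactly where the classical computation $\pi_1(\PP^2\setminus B_{b,\mu_i})\cong\ZZ/b\ZZ$ for a smooth plane curve is essential: it forces the two characters onto $\ZZ/b\ZZ$ to coincide up to an automorphism and thereby produces the automorphism $\sigma$ required by the proposition. Once this compatibility is in place, the numerical comparison of connected numbers finishes the argument at once.
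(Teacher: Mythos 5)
Your proof is correct and follows exactly the route the paper intends: the theorem is stated without a written proof, immediately after the paper assembles Proposition~\ref{prop. inv} (topological invariance of the connected number) and Proposition~\ref{prop. artal} (the values $c_{\phi_{B_{b,\mu_i}}}(L_{\mu_i})=b/\mu_i$, pairwise distinct for distinct divisors $\mu_i$), which is precisely the combination you use. The details you supply --- the explicit common combinatorics of the arrangements and the use of $\pi_1(\PP^2\setminus B_{b,\mu_i})\cong\ZZ/b\ZZ$ to produce the automorphism $\sigma$ required by Proposition~\ref{prop. inv} --- correctly fill in what the paper leaves implicit.
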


Taketo Shirane

National Institute of Technology, Ube College, 2-14-1 Tokiwadai, Ube 755-8555, Yamaguchi Japan

\textit{E-mail address}: {\tt tshirane@ube-k.ac.jp}
\end{document}